\documentclass{article}
\usepackage[margin=1in]{geometry}
\usepackage{amsmath} 
\usepackage{amsthm} 
\usepackage{microtype}
\usepackage{amssymb}
\usepackage{hyperref}

\newtheorem{theorem}{Theorem}
\newtheorem{corollary}[theorem]{Corollary}
\newtheorem{lemma}[theorem]{Lemma}
\newtheorem{prop}[theorem]{Proposition}
\newtheorem*{conjecture*}{Conjecture}
\theoremstyle{remark}
\newtheorem*{remark}{Remark}

\title{Counting Reachable Positions in a Simplified Model of Connect Four}
\author{Alistair Hartley Folster\\
\small Columbus State Community College}
\date{}

\begin{document}

\maketitle

\begin{center}
\small
\textit{Author's accepted manuscript. Published in the Journal of Combinatorial Mathematics and Combinatorial Computing, Volume 131, pp.~57--65, 12 June 2026. The published version is available at \url{https://doi.org/10.61091/jcmcc131-04}.}
\end{center}

\begin{abstract}
By eliminating the win condition in the game of Connect Four and extending the board to infinite height, a rich state space of positions is obtained. We investigate the number of positions reachable on an $n$-column board after $k$ color-alternating moves. For fixed $k$ we demonstrate polynomiality, derive a partial formula for the polynomial coefficients, and precisely characterize the asymptotic behavior as $n \to \infty$. We then turn our attention to the fixed-$n$ case and show that, under a natural addition operation, positions reachable in an even number of moves form a monoid with a highly symmetric finite generating set; by examining certain free submonoids, we bound the exponential growth rate as $k \to \infty$.

\end{abstract}

\section{Introduction}

For the standard $7\times 6$ board, the number of legal Connect Four positions at various ply counts appears in the On-Line Encyclopedia of Integer Sequences (OEIS) entry A212693 \cite{OEIS}. Edelkamp and Kissmann \cite{Edelkamp_Kissmann} studied the complexity of BDD representations of the state space as a function of board size, both with and without the game's win condition enforced. However, little work has been done on the combinatorial theory of the state space. As noted by Allis \cite{Allis} in his work that solved the game as a first-player win, while upper bounds on the number of legal positions are easily established, excluding illegal positions is highly nontrivial owing to the twin constraints of the win condition and the turn-based nature of play.

This paper examines the consequences of the latter constraint. In approaching the combinatorics of Connect Four, it is natural to isolate the core mechanics of token stacking and color alternation, which produces a combinatorial object of independent appeal. To this end, we disregard the win condition and consider an infinitely tall board.

Let a \emph{column} be an infinite vertical stack of cells and a \emph{board} be a finite sequence of columns. On a given board, define a \emph{position} to be a placement of finitely many tokens in cells such that each token is either red or yellow, no two tokens occupy the same cell, and the occupied cells in each column form a contiguous block starting from the bottom. 

A \emph{reachable} position is one which may be reached from an empty board by a finite sequence of moves, where each move consists of the insertion of a token into the lowest empty cell in some column, with the inserted tokens alternating globally between red and yellow from move to move. (We adopt the convention that a red token is placed on the first move.) On an $n$-column board, we denote the number of positions reachable in exactly $k$ moves by $F(n, k)$.

The initial values of $F(2, k)$ and $F(3, k)$ appear in OEIS entries A320452 and A320731 \cite{OEIS}; the definitions given therein are equivalent to our own up to terminological differences, and we have independently reproduced the values by BFS of the state space. This paper establishes several basic results about the structural underpinnings, algebraic interpretation, and asymptotic behavior of these position counts.

\section{Some Additional Terminology and a Preliminary Upper Bound}
\label{sec:upper-bound}

We refer to the number of tokens in a position as its \emph{size}. On a given board, a position of size $k$ can be uniquely specified by distributing $k$ indistinguishable tokens among the columns and then coloring every token either red or yellow; this gives $\binom{n+k-1}{k}2^k$ size-$k$ positions on an $n$-column board. 

For a position of size $k$ to be reachable, it must contain exactly $\lceil k/2 \rceil$ red tokens. We will call positions that fulfill this condition \emph{color-balanced}. This yields

\begin{equation}
\label{eq:color-balanced}
F(n, k) \leq
\binom{n+k-1}{k}\binom{k}{\lceil \frac{k}{2} \rceil}.
\end{equation}

\section{Behavior of \texorpdfstring{$F(n, k)$}{F(n, k)} for Fixed \texorpdfstring{$k$}{k}}

On a board with at least $i$ columns, fix any $i$ columns. The number of reachable positions of size $k$ with precisely these columns nonempty depends only on $k$ and $i$; the placement of the columns within the board is irrelevant. Let $T_{k, i}$ denote this number.

\begin{theorem}
\label{thm:polynomiality}
 For each $k$, $F(n, k)$ is given by a polynomial in $n$ of degree $k$. 
\end{theorem}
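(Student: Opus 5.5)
We decompose $F(n,k)$ by the set of nonempty columns. A reachable position of size $k$ has some number $i$ of nonempty columns, with $0 \le i \le \min(n,k)$. By the definition of $T_{k,i}$, the number of reachable positions whose nonempty columns are exactly a given $i$-subset depends only on $k$ and $i$. There are $\binom{n}{i}$ such subsets, so
\begin{equation*}
F(n,k) = \sum_{i=0}^{k} \binom{n}{i}\, T_{k,i},
\end{equation*}
where we use $\binom{n}{i}=0$ for $i>n$ so that the sum may run to $k$ regardless of $n$. Each $\binom{n}{i}=n(n-1)\cdots(n-i+1)/i!$ is a polynomial in $n$ of degree $i$, and this remains correct for $n<i$ because the product then vanishes. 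Hence $F(n,k)$ agrees for every $n\ge 1$ with the polynomial $\sum_i T_{k,i}\binom{n}{i}$, of degree at most $k$.

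Before this we must justify the claim in the definition of $T_{k,i}$ that the count is independent of the location of the chosen columns. The point is that reachability of a position depends on its columns only as an unordered multiset of stacks, since moves into different columns do not interact. Any bijection between two $i$-subsets of columns, extended to a permutation of the board, carries move sequences to move sequences and reachable positions to reachable positions. Also, a reachable position on a larger board whose other columns are empty is reached by a sequence never touching those columns, so the count does not depend on $n$ either.

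It remains to show the degree is exactly $k$, that is, $T_{k,k}\neq 0$. With $i=k$, every nonempty column holds exactly one token. We need $\lceil k/2\rceil$ red singleton columns and $\lfloor k/2\rfloor$ yellow ones, and these are reached by playing red and yellow into the prescribed columns in alternation. Conversely, color balance forces exactly these counts. Therefore
\begin{equation*}
T_{k,k} = \binom{k}{\lceil k/2\rceil} > 0,
\end{equation*}
and the coefficient of $n^k$ is $T_{k,k}/k! \neq 0$.

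The only point requiring some care is the justification of the column-relabeling invariance. The rest is bookkeeping, and the leading coefficient computation is immediate. For $k=0$ we get $F(n,0)=1$, which is consistent with the statement.
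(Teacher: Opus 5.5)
Your proposal is correct and follows the paper's proof. Both expand $F(n,k)=\sum_i T_{k,i}\binom{n}{i}$ by the set of nonempty columns and get degree exactly $k$ from $T_{k,k}\neq 0$; you also justify the column-relabeling invariance and compute $T_{k,k}=\binom{k}{\lceil k/2\rceil}$ explicitly, which the paper only asserts at this point (the value agrees with Corollary~\ref{cor:partial-T-formula} at $i=k$).
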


\begin{proof}
On an $n$-column board, there are $T_{k, i}\,\binom{n}{i}$ reachable positions of size $k$ with exactly $i$ columns nonempty, where $\binom{n}{i}$ counts choices of $i$ columns and $T_{k, i}$ counts size-$k$ positions with just the chosen columns nonempty.

Since the number $i$ of nonempty columns after $k$ moves ($k > 0$) satisfies $1 \leq i \leq k$, we can write
\begin{equation}
\label{eq:F-expansion}
F(n, k) = \sum_{i = 1}^{k} T_{k, i}\,\binom{n}{i}.
\end{equation}
(We interpret $\binom{n}{i} = 0$ for $i > n$, so the formula remains valid for all $k$.)

Since $T_{k, k} \neq 0$, this is a polynomial in $n$ of degree $k$.
\end{proof}

Understanding the coefficients $T_{k, i}$ would yield insight into $F(n, k)$. Of course, $T_{k, 1} = 1$ for all $k$ as there is only one reachable position of a given size using only a single specific column. The next theorem will enable us to additionally obtain a formula for all values of $T_{k, i}$  with $i > k/2$. 

Let the \emph{crown} of a position be the set of tokens with no tokens above them in the same column. 

\begin{lemma}
\label{lem:removal}
If removing a token from the crown of a color-balanced position $P$ yields a reachable position, then $P$ is reachable.
\end{lemma}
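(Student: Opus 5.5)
The plan is to extend a move sequence that reaches the smaller position by exactly one move. Let $P$ have size $k \geq 1$. Let $t$ be the crown token whose removal yields the reachable position $Q$ of size $k-1$, and let $c$ be the column containing $t$. Since $t$ lies in the crown of $P$, it is the topmost token of column $c$ in $P$. After removing it, the cell it occupied is the lowest empty cell of column $c$ in $Q$. Every other column is identical in $P$ and $Q$.

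Fix a sequence of $k-1$ color-alternating moves taking the empty board to $Q$. I will append one more move: insert a token into column $c$. It lands in the lowest empty cell of column $c$, which is exactly the cell of $t$. It remains to check that the color forced on this $k$-th move agrees with the color of $t$. By the convention, the $k$-th move is red when $k$ is odd and yellow when $k$ is even.

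The color check is a counting argument using the color-balance condition from Section~\ref{sec:upper-bound}. Since $Q$ is reachable, it is color-balanced, so it has $\lceil (k-1)/2 \rceil$ red tokens. By hypothesis $P$ has $\lceil k/2 \rceil$ red tokens. Therefore $t$ is red exactly when $\lceil k/2 \rceil - \lceil (k-1)/2 \rceil = 1$. This difference is $1$ for odd $k$ and $0$ for even $k$. So $t$ is red precisely when $k$ is odd, which matches the color of the $k$-th move.

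Hence the extended sequence places a token of the correct color in the correct cell and produces $P$, so $P$ is reachable. There is no real obstacle here. The only points needing care are the parity bookkeeping in the color count and the observation that the crown hypothesis places $t$ in the lowest empty cell of its column in $Q$, so appending the move is legal.
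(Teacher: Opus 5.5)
Your proof is correct and takes the same approach as the paper: append one move in the column of the removed crown token to a sequence realizing the smaller position, and use the color balance of both positions to show that the appended token has the color the move's parity requires. Your computation of $\lceil k/2\rceil - \lceil (k-1)/2\rceil$ spells out the parity check that the paper compresses into the phrase ``placing this token preserves color balance.''
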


\begin{proof} 
$P$ is obtained from a reachable position by placing a token in the lowest nonempty cell of a column. Because placing this token preserves color balance, it is of the correct color for its placement to be a valid move.
\end{proof}

We will use this principle to prove the following theorem inductively.

\begin{theorem}
\label{thm:reachability}
For all color-balanced positions $P$ of a given size $k$, if the number $i$ of nonempty columns satisfies $i > k/2$, then $P$ is reachable.
\end{theorem}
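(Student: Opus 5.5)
We argue by strong induction on $k$, using Lemma~\ref{lem:removal} to reduce to a smaller position that still satisfies the hypothesis. The cases $k=0$ and $k=1$ are immediate: the empty position, and a single red token, are reachable. For the inductive step, let $P$ be color-balanced of size $k\ge 2$ with $i>k/2$ nonempty columns. We look for a crown token whose removal leaves a color-balanced position $P'$ of size $k-1$ with more than $(k-1)/2$ nonempty columns. Then $P'$ is reachable by induction, and $P$ is reachable by Lemma~\ref{lem:removal}.

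First, determine the color to remove. If $k$ is odd, $P$ has $(k+1)/2$ red tokens and $P'$ needs $(k-1)/2$, so we must remove a red token. If $k$ is even, $P$ has $k/2$ red tokens and $P'$ needs $\lceil (k-1)/2\rceil = k/2$, so we must remove a yellow token. Call the required color $c$.

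Next, show that a crown token of color $c$ exists. The crown has exactly $i$ tokens, one per nonempty column. If none had color $c$, the other color would occur at least $i>k/2$ times. But the other color occurs $(k-1)/2$ times when $k$ is odd and $k/2$ times when $k$ is even, which is a contradiction.

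Now consider the column count after removal. Removing a crown token from a column of height at least $2$ keeps $i>k/2>(k-1)/2$ columns, so this case is done. Removing a token that forms a singleton column leaves $i-1$ columns, and we need $i-1>(k-1)/2$. If $k$ is even, then $i\ge k/2+1$, so $i-1\ge k/2>(k-1)/2$. If $k$ is odd and $i\ge (k+3)/2$, the inequality also holds.

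The main obstacle is the boundary case: $k$ odd, $i=(k+1)/2$, and every red crown token sits alone in its column. Here single-token removal genuinely fails. For example, with $k=3$ and columns $R$ and $R,Y$ (bottom to top), removing the red singleton leaves the single column $R,Y$, which falls outside the hypothesis. So in this case we undo two moves at once.

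Since $k=2i-1>i$ for $k\ge 3$, some column has height at least $2$. Its crown token is yellow, because all red crown tokens are singletons. Let $P''$ be obtained from $P$ by removing one red singleton token and this yellow crown token. Then $P''$ has size $k-2$ (odd) and $i-1=(k-1)/2$ red tokens, so it is color-balanced. It has $i-1=(k-1)/2>(k-2)/2$ nonempty columns, since the column losing the yellow token keeps at least one token. By induction $P''$ is reachable.

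From $P''$, placing the yellow token back on its column is a valid move: the color balance forces the correct color, exactly as in the proof of Lemma~\ref{lem:removal}. Placing the red token into the empty column is then also a valid move, and the result is $P$. This completes the induction.
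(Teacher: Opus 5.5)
Your proof is correct and follows essentially the same route as the paper. Both argue by induction on $k$ via Lemma~\ref{lem:removal}, use the crown counting argument to find a token of the required color, and fall back on removing two tokens (that token plus the opposite-colored top of a column of height at least $2$) when removing one would break the hypothesis. The only difference is the case split: the paper branches on the color of the top token of an arbitrary tall column and removes two tokens whenever that top token has the wrong color, whereas you remove a single token wherever the column count survives and reserve the two-token removal for the boundary case ($k$ odd, $i=(k+1)/2$, every red crown token a singleton).
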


\begin{proof}
For $k = 1$, any color-balanced position comprises a single red token and is reachable; for $k = 2$, any color-balanced position with $i = 2$ nonempty columns comprises one token of each color on the bottom row of the board and is reachable. Suppose the theorem statement holds for sizes $k - 1$ and $k - 2$, and consider a color-balanced position of size $k$ with $i > k/2$ nonempty columns. 

If $i = (k + 1)/2$, then $k$ is odd, and there are $(k - 1)/2$ yellow tokens; since the crown contains $i$ tokens, it must contain a red token, whose removal would preserve color balance. If $i > (k + 1)/2$, then the crown contains tokens of both colors, since there are at most $(k + 1)/2$ tokens of a given color. In all cases, the crown contains a token of the necessary color such that its removal would preserve color balance. In the remainder of the proof we suppose this color to be red without loss of generality; the subsequent argument is symmetric with respect to color.

Select some column that contains more than one token, and call its topmost token $t$. (If no such column exists, the position is trivially reachable.) 

If $t$ is red, removing it yields a color-balanced position of size $k - 1$ with $i$ nonempty columns, which is reachable by the inductive hypothesis; the original position is then reachable by Lemma~\ref{lem:removal}.

If $t$ is yellow, removing any red crown token (the existence of which is established above) yields a color-balanced position $P'$ of size $k - 1$ with either $i$ or $i - 1$ nonempty columns. Removing $t$ from $P'$ yields a color-balanced position of size $k - 2$ with $i$ or $i - 1$ nonempty columns, which is reachable by the inductive hypothesis. By Lemma~\ref{lem:removal}, the intermediate position $P'$ is reachable; by a second application of Lemma~\ref{lem:removal}, the original position is also reachable.

\end{proof}

\begin{corollary}
\label{cor:partial-T-formula}
If $i > k/2$, then \[T_{k, i} = \binom{k-1}{i-1}\binom{k}{\lceil\frac{k}{2}\rceil}.\]
\end{corollary}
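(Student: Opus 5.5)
By Theorem~\ref{thm:reachability}, when $i > k/2$ a size-$k$ position whose nonempty columns are exactly a fixed set of $i$ columns is reachable if and only if it is color-balanced. Necessity is the observation from Section~\ref{sec:upper-bound}: every reachable position of size $k$ has exactly $\lceil k/2\rceil$ red tokens. The proof therefore reduces to counting the color-balanced positions of size $k$ whose nonempty columns are precisely the $i$ chosen ones.

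The count splits into two independent choices.

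First, choose the column heights. We need the number of ways to distribute $k$ indistinguishable tokens among the $i$ fixed columns so that every column receives at least one token. This is the number of compositions of $k$ into $i$ positive parts, which by stars and bars equals $\binom{k-1}{i-1}$.

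Second, choose the colors. Once the heights are fixed, the $k$ cells to be filled are determined, and a coloring is specified by choosing which of these cells are red. Color balance requires exactly $\lceil k/2\rceil$ of them to be red, giving $\binom{k}{\lceil k/2\rceil}$ choices. This number is the same for every choice of heights.

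Distinct pairs (height profile, set of red cells) give distinct positions, and every position of the required kind arises from exactly one such pair. The product
\[
\binom{k-1}{i-1}\binom{k}{\lceil k/2\rceil}
\]
is therefore the number of color-balanced size-$k$ positions on exactly the chosen $i$ columns. By the characterization in the first paragraph, it equals $T_{k,i}$.

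There is no real obstacle here; the substantive work was already done in Theorem~\ref{thm:reachability}. The one point needing care is that the hypothesis $i>k/2$ of that theorem refers to the number of nonempty columns. This matches exactly the "precisely these columns nonempty" condition in the definition of $T_{k,i}$, which is why we count compositions into positive parts rather than weak compositions.
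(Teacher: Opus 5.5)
Your proposal is correct and follows the paper's argument: Theorem~\ref{thm:reachability}, together with the necessity of color balance, identifies the positions counted by $T_{k,i}$ with the color-balanced ones, and these number $\binom{k-1}{i-1}$ compositions of $k$ into $i$ positive parts times $\binom{k}{\lceil k/2\rceil}$ colorings. You spell out the ``reachable if and only if color-balanced'' step and the bijection with (height profile, red set) pairs, both of which the paper's one-line proof leaves implicit.
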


\begin{proof}
 The first factor counts ways to distribute $k$ tokens among $i$ columns with at least one token in each column (i.e., compositions of $k$ into $i$ positive parts); the second factor counts ways to color $\lceil k/2 \rceil$ of the tokens red.
\end{proof}

\begin{corollary}
For $k \geq 2$, the number of color-balanced positions of size $k$ on an $n$-column board that are \emph{not} reachable is given by a polynomial in $n$ of degree $\lfloor k/2 \rfloor$.
\end{corollary}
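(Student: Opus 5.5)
Write the number of color-balanced positions of size $k$ on an $n$-column board in the same column-decomposition form as~\eqref{eq:F-expansion}, and subtract. Fix $i$ columns. The color-balanced size-$k$ positions with exactly those columns nonempty number $C_{k,i}=\binom{k-1}{i-1}\binom{k}{\lceil k/2\rceil}$: there are $\binom{k-1}{i-1}$ compositions of $k$ into $i$ positive parts, and $\binom{k}{\lceil k/2\rceil}$ ways to choose which tokens are red. Summing over the choice of columns gives
\[
\sum_{i=1}^{k} C_{k,i}\binom{n}{i}
\]
color-balanced positions in total. This agrees with the count $\binom{n+k-1}{k}\binom{k}{\lceil k/2\rceil}$ via Vandermonde, though we do not need that. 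Subtracting~\eqref{eq:F-expansion}, the number of color-balanced but unreachable positions is
\[
U(n,k)=\sum_{i=1}^{k}\bigl(C_{k,i}-T_{k,i}\bigr)\binom{n}{i}.
\]

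By Corollary~\ref{cor:partial-T-formula}, every term with $i>k/2$ vanishes, so the sum runs only over $1\le i\le\lfloor k/2\rfloor$. It is therefore a polynomial in $n$ of degree at most $\lfloor k/2\rfloor$. Each $\binom{n}{i}$ has degree exactly $i$, so the degree is exactly $\lfloor k/2\rfloor$ once we show $T_{k,m}<C_{k,m}$ for $m=\lfloor k/2\rfloor$. This requires $k\ge2$, so that $m\ge1$.

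The main step is to exhibit, on exactly $m$ fixed columns, one color-balanced size-$k$ position that is not reachable.
\begin{itemize}
\item For $k=2m$ even: put two tokens in each of the $m$ columns, and in the first column place a red token on top of a yellow token at the bottom. The remaining colors are chosen to keep the position color-balanced.
\item For $k=2m+1$ odd: put all the extra tokens in one column and use the same idea.
\end{itemize}
A cleaner witness may be the following. In every column, make the bottom token yellow where possible. The first move of any play is a red token on an empty column, so some column must have a red bottom token. Hence if every nonempty column has a yellow bottom token, the position is unreachable.

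So I will choose all $m$ bottom tokens yellow. Then I check that color balance is still achievable. We need $\lceil k/2\rceil$ red tokens among the $k-m$ non-bottom tokens, with $\lfloor k/2\rfloor-m=0$ further yellows. This is fine, since $k-m=\lceil k/2\rceil$. The witness is therefore: each column has a yellow bottom and red tokens above, with column sizes forming any composition of $k$ into $m$ positive parts where each part is at least $2$ (total $k\ge 2m$, which holds). This position is color-balanced and unreachable, so $C_{k,m}-T_{k,m}\ge1$, and the leading coefficient is positive.

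The only delicate point is the witness construction together with the justification that the first red token must rest at the bottom of some column. Once that is in place, the degree claim follows immediately.
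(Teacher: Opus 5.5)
Your proof is correct and follows the paper's argument. You decompose the color-balanced count by the number of nonempty columns (the paper reaches the same sum via Vandermonde applied to \eqref{eq:color-balanced}), subtract \eqref{eq:F-expansion}, remove the terms with $i>k/2$ using Corollary~\ref{cor:partial-T-formula}, and show the $i=\lfloor k/2\rfloor$ coefficient is positive with the paper's witness: yellow bottom tokens with red tokens stacked above, so no red token lies on the bottom row. Your first bullet-point witness is underspecified (it can be reachable depending on the other columns) and should be dropped, and the restriction to parts of size at least $2$ is unnecessary since any composition of $k$ into $m$ positive parts works, but neither affects the final argument.
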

\begin{proof}
Applying Vandermonde's convolution to the right side of Equation~\eqref{eq:color-balanced} and subtracting the right side of Equation~\eqref{eq:F-expansion} gives \[\sum_{i = 1}^{k}\left(\binom{k-1}{i-1}\binom{k}{\lceil\frac{k}{2}\rceil} - T_{k, i}\right)\binom{n}{i}\] for the number of unreachable color-balanced positions.
This is a polynomial in $n$; by Corollary~\ref{cor:partial-T-formula} its degree is at most $\lfloor k/2 \rfloor$. Its degree is \emph{precisely} $\lfloor k/2 \rfloor$ if $T_{k,i} \neq \binom{k-1}{i-1}\binom{k}{\lceil k/2 \rceil}$ for $i = \lfloor k/2 \rfloor$, that is, if there exist unreachable color-balanced positions of size $k$ with $\lfloor k/2 \rfloor$ nonempty columns.

For $k \geq 2$, such a position may be constructed as follows: distribute  $\lfloor k/2 \rfloor$ yellow tokens arbitrarily among the bottom-row cells of the board, then distribute $\lceil k/2 \rceil$ red tokens arbitrarily (in any manner preserving column-wise vertical contiguity) above these yellow tokens. This yields a color-balanced position with no red token on the bottom row; any reachable position must have a red token on the bottom row, so this position is unreachable.

\end{proof}
\begin{corollary}
For each $k$, as $n \to \infty$, \[F(n, k) \sim \frac{1}{k!}\binom{k}{\lceil\frac{k}{2}\rceil}n^k = \frac{1}{\lceil\frac{k}{2}\rceil!\lfloor\frac{k}{2}\rfloor!}n^k.\]

\end{corollary}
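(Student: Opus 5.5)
The plan is to read off the leading term of the expansion in Equation~\eqref{eq:F-expansion}. By Theorem~\ref{thm:polynomiality}, for fixed $k$ we have
\[
F(n,k) = \sum_{i=1}^{k} T_{k,i}\binom{n}{i},
\]
a finite sum whose coefficients do not depend on $n$. For each fixed $i$, $\binom{n}{i} = n^i/i! + O(n^{i-1})$ as $n \to \infty$. Hence every term with $i<k$ is $O(n^{k-1})$. The only term that can contribute to order $n^k$ is $T_{k,k}\binom{n}{k} \sim T_{k,k}\,n^k/k!$, so it suffices to compute $T_{k,k}$ exactly.

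For that I would invoke Corollary~\ref{cor:partial-T-formula} with $i=k$. The hypothesis $k > k/2$ holds for every $k \ge 1$, which gives
\[
T_{k,k} = \binom{k-1}{k-1}\binom{k}{\lceil k/2\rceil} = \binom{k}{\lceil k/2\rceil}.
\]
Concretely, each of the $k$ chosen columns holds exactly one token, and by Theorem~\ref{thm:reachability} every color-balanced coloring of these tokens is reachable. Since $T_{k,k} \neq 0$, dividing $F(n,k)$ by $T_{k,k}n^k/k!$ gives a quantity tending to $1$. This yields
\[
F(n,k) \sim \frac{1}{k!}\binom{k}{\lceil k/2\rceil}\, n^k .
\]

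The second displayed form follows by writing $\binom{k}{\lceil k/2\rceil} = \dfrac{k!}{\lceil k/2\rceil!\,\lfloor k/2\rfloor!}$ and using $k - \lceil k/2\rceil = \lfloor k/2\rfloor$. The $k!$ then cancels.

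There is no real obstacle here. The only point to check is that Corollary~\ref{cor:partial-T-formula} applies at $i=k$ for all $k\ge1$, which is immediate. The case $k=0$ is degenerate: $F(n,0)=1$, which matches the formula if one reads it as the constant $1$.
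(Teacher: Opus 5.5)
Your proposal is correct and follows the paper's proof: it reads off the leading term $T_{k,k}\binom{n}{k}$ of Equation~\eqref{eq:F-expansion} and evaluates $T_{k,k} = \binom{k}{\lceil k/2\rceil}$ via Corollary~\ref{cor:partial-T-formula} at $i = k$. Treating $k=0$ separately is a sensible addition, since the expansion's sum is empty in that case.
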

\begin{proof}
This follows from Equation~\eqref{eq:F-expansion} and Corollary~\ref{cor:partial-T-formula}.
\end{proof}

\begin{remark} 
As the number of columns goes to infinity, almost all color-balanced positions of a given size become reachable. This is unsurprising: if columns greatly outnumber tokens, then most positions have tokens sparsely distributed with no stacking. However, our results refine this intuition by quantifying \emph{how much} stacking can occur without losing guaranteed reachability, thereby showing that unreachable color-balanced position counts in fact grow with only \emph{half} the degree of overall color-balanced position counts.
\end{remark}

\section{Behavior of \texorpdfstring{$F(n, k)$}{F(n, k)} for Fixed \texorpdfstring{$n$}{n}}

While our partial formula for $T_{k, i}$ tells us what happens to $F(n, k)$ as $n \to \infty$, it tells us nothing about what happens as $k \to \infty$; for sufficiently large $k$, all terms with $i > k/2$ in Equation~\eqref{eq:F-expansion} vanish due to the factor of $\binom{n}{i}$. In this section, we will establish some basic facts about the exponential growth rate $\lim_{k \to \infty} F(n, k)^{1/k}$; in addition, we will show that a subset of the reachable state space has a natural algebraic interpretation.

If a column's state is considered a sequence of tokens read from bottom to top, then define the addition of positions by column-wise concatenation (intuitively, ``stacking''). Note that the set of positions on an $n$-column board under addition is the $n$th direct power of the free monoid $F_2$ on two generators $R$ and $Y$.

In proving the next theorem, it will be useful to define the \emph{color inversion} $\overline{P}$ of a position $P$ as the position obtained by inverting the color of every token in $P$.

\begin{theorem}
$\lambda_n = \lim_{k \to \infty} F(n, k)^{1/k}$ exists and is at most $2$ for all $n$.
\end{theorem}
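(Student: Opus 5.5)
The plan is to prove two things: an upper bound on $\limsup_{k\to\infty} F(n,k)^{1/k}$ taken from Equation~\eqref{eq:color-balanced}, and an approximate supermultiplicativity of $F(n,\cdot)$ under the addition of positions. Together with a Fekete-type lemma, the second gives existence of the limit.

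\textbf{Upper bound.} By Equation~\eqref{eq:color-balanced}, $F(n,k)\le \binom{n+k-1}{k}\binom{k}{\lceil k/2\rceil}\le \binom{n+k-1}{n-1}\,2^k$. For fixed $n$ the first factor is a polynomial in $k$ of degree $n-1$, so its $k$th root tends to $1$. Hence $\limsup_{k} F(n,k)^{1/k}\le 2$. Also $F(n,k)\ge 1$ for every $k$: stack all $k$ moves in one column. So all logarithms below are defined and nonnegative.

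\textbf{Approximate supermultiplicativity.} Let $P$ be reachable in $a$ moves and $Q$ reachable in $b$ moves.
\begin{itemize}
\item If $a$ is even, the move after $P$ is red. We can then replay $Q$'s move sequence column by column on top of $P$, which reaches $P+Q$ in $a+b$ moves.
\item If $a$ is odd, the next move is yellow. Replaying $Q$'s sequence with colors swapped reaches $P+\overline{Q}$.
\end{itemize}
Since $Q\mapsto\overline{Q}$ is a bijection on positions of a given size, either case gives a map from pairs $(P,Q)$ of reachable positions of sizes $a,b$ into reachable positions of size $a+b$.

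This map need not be injective, because column-wise concatenation can be split in more than one way. However, given the sum $S$, the pair is determined once we know how many tokens of each column of $S$ belong to $P$. That is a weak composition of $a$ into $n$ parts, and there are at most $\binom{a+n-1}{n-1}\le (a+b+1)^{n}$ of them. Therefore
\[
F(n,a+b)\ \ge\ \frac{F(n,a)\,F(n,b)}{(a+b+1)^{n}} .
\]

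\textbf{Existence of the limit.} Put $h(k)=\log F(n,k)$. The inequality above says $h(a+b)\ge h(a)+h(b)-n\log(a+b+1)$, i.e.\ $h$ is superadditive up to an error $\phi(k)=n\log(k+1)$ that is nondecreasing and satisfies $\sum_k \phi(k)/k^2<\infty$. The de Bruijn--Erd\H{o}s generalization of Fekete's lemma then gives that $h(k)/k$ converges to $\sup_k (h(k)-\phi(k))/k$ or a similar expression. By the upper bound this limit is finite and at most $\log 2$, so $\lambda_n$ exists and $\lambda_n\le 2$.

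If I prefer to avoid the generalized lemma, I would instead run the standard Fekete argument by hand. Fix $m$, write $k=qm+r$, and iterate the inequality $q$ times. The accumulated loss is at most $q\,n\log(k+1)$, which is $o(k)$ per step when divided by $k$ after letting $k\to\infty$ and then $m\to\infty$ (choose $m$ large so that $n\log(k+1)/m$ is small relative to growth; more carefully, take $m\sim\sqrt{k}$). This yields $\liminf_k h(k)/k\ge \limsup_k h(k)/k$.

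The main obstacle is this Fekete step with the polylogarithmic error. The reachability of $P+Q$ and $P+\overline{Q}$, and the counting of splittings, are routine. The care is needed in making the error term $n\log(a+b+1)$ negligible, which is exactly where the generalized lemma or the $m\sim\sqrt{k}$ blocking is used.
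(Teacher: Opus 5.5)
Your argument is correct, but it runs in the opposite direction from the paper's. The paper \emph{splits} rather than glues. For each reachable $P$ of size $a+b$ it fixes one realizing move sequence and maps $P$ to $(A_P,B_P)$, the positions produced by the length-$a$ prefix and length-$b$ suffix (using $\overline{B_P}$ when $a$ is odd). Since $A_P+B_P=P$, this map is injective with no multiplicity to control, because the chosen move sequence fixes where the cut falls. That gives exact submultiplicativity $F(n,a+b)\le F(n,a)F(n,b)$. Plain Fekete then yields both existence and $\lambda_n=\inf_k F(n,k)^{1/k}$, and the paper later uses this formula for the rigorous bound $\lambda_2\le F(2,35)^{1/35}$.

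Your gluing map (the mechanism behind Proposition~\ref{prop:even}) is not injective, since a concatenation can be cut in up to $\binom{a+n-1}{n-1}$ ways. So you pay a polynomial factor and need the de Bruijn--Erd\H{o}s lemma. That lemma does apply here: take $\phi(k)=n\log(k+1)$ and apply it to $-h$. Your bounds $0\le h(k)\le k\log 2+O(\log k)$ then make the limit finite and at most $\log 2$. What you lose is the infimum characterization.

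Your hand-made fallback, however, does not work as written. With $m$ fixed, iterating $q\approx k/m$ times loses $qn\log(k+1)$. After dividing by $k$ the error is about $(n/m)\log(k+1)\to\infty$, so letting $k\to\infty$ before $m\to\infty$ fails. Taking $m\sim\sqrt{k}$ removes the error, but then $h(k)/k$ is compared only with $h(m)/m$ for $m$ forced near $\sqrt{k}$. That does not by itself give $\liminf_k h(k)/k\ge\limsup_m h(m)/m$.

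The standard fix is dyadic:
\begin{itemize}
\item From $h(2^{j+1}m)\ge 2h(2^jm)-\phi(2^{j+1}m)$ you get $h(2^jm)/(2^jm)\ge h(m)/m-\delta_m$, where $\delta_m=\sum_{i\ge1}\phi(2^im)/(2^im)=O(n\log m/m)$.
\item Writing $q$ in binary then needs only $O(\log k)$ merges, each costing at most $\phi(k)$.
\item The total loss is $O(\log^2 k)=o(k)$ with $m$ fixed, so the limits can now be taken in the order $k\to\infty$, then $m\to\infty$.
\end{itemize}
So either cite the lemma or use this version.
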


\begin{proof}

Consider any two positive integers $a$ and $b$. For each reachable position $P$ of size $a + b$, choose a sequence of moves $S_P$ that realizes $P$, where each move consists of placing a red token (for odd-indexed moves) or a yellow token (for even-indexed moves) in some column. Let $A_P$ be the position realized by the length-$a$ prefix of $S_P$, and $B_P$ be the position realized by the length-$b$ suffix $S'_P$ of $S_P$. Then for all $P$, $A_P$ is reachable and $A_P + B_P = P$. 

If $a$ is even, $S'_P$ always begins with the placement of a red token. Any color-alternating move sequence beginning with the placement of a red token leads to a reachable position, so $B_P$ is reachable. If $a$ is odd, on the other hand, then $S'_P$ always begins with the placement of a yellow token. In this case, $\overline{B_P}$ is reachable.

If $a$ is even, consider $F_1(P) = (A_P, B_P)$; if $a$ is odd, consider $F_2(P) = (A_P, \overline{B_P})$. 

Let $F_0(A, B) = (A, \overline{B})$; then $F_2 = F_0 \circ F_1$. Because $A_P + B_P = P$, the map $F_1$ is injective. Because color inversion is injective, so is the map $F_0$, and therefore, so is $F_2$.

In all cases, there exists an injection from reachable positions of size $a + b$ to ordered pairs of reachable positions of sizes $a$ and $b$. From this, it follows that $F(n, a + b) \leq F(n, a) \cdot F(n, b)$, which implies via Fekete's lemma that $\lambda_n$ exists and equals $\inf_{k \in \mathbb{N}} F(n, k)^{1/k}$.

As shown in Section~\ref{sec:upper-bound}, $F(n, k) \leq \binom{n+k-1}{k}2^k$; as $\binom{n+k-1}{k}$ grows polynomially in $k$, the $k$th root of this upper bound tends to $2$ as $k \to \infty$, from which it follows that $\lambda_n \leq 2$.

\end{proof}

While the following proposition is useful for studying $\lambda_n$, it is worth noting in its own right, because it implies that reachable positions of even size on an $n$-column board form a submonoid of $(F_2)^n$. 

\begin{prop}
\label{prop:even}
Reachable positions of even size are closed under addition.
\end{prop}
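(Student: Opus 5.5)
We prove the statement directly by concatenating move sequences. Let $P$ and $Q$ be reachable positions of even sizes $a$ and $b$. Choose a color-alternating move sequence $S_P$ of length $a$ realizing $P$ and a sequence $S_Q$ of length $b$ realizing $Q$. Each sequence begins with a red placement, since that is the convention for every reachable position. Let $S$ be the sequence of length $a+b$ that performs $S_P$ and then performs $S_Q$, where each move of $S_Q$ is read as ``place a token in column $j$.''

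Two things must be checked. The first is that $S$ is color-alternating with red first. The moves of $S_P$ alternate starting with red. Because $a$ is even, the last move of $S_P$ is yellow, so move $a+1$ must be red, and the first move of $S_Q$ is indeed red. From then on the moves of $S_Q$ alternate, and they occupy positions $a+1,\dots,a+b$. Since $a$ is even, each of these indices has the same parity as the corresponding index within $S_Q$. Hence every token in the $S_Q$ part has the same color as it did when $S_Q$ was played from the empty board. Evenness of $a$ is exactly what makes the parities agree. Evenness of $b$ is used only to ensure that the sum again has even size.

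The second is that the position realized by $S$ is $P+Q$. After the first $a$ moves the board shows $P$. Each later move of $S$ places a token on top of the current stack in its column. By induction on the number of $S_Q$ moves performed, the board after $a+m$ moves equals $P + Q_m$, where $Q_m$ is the position realized by the first $m$ moves of $S_Q$. The inductive step holds because appending a token to column $j$ of $Q_m$ and then concatenating commutes with concatenating first and then appending, since addition is column-wise concatenation. Taking $m=b$ gives $P+Q$.

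Therefore $P+Q$ is reachable and has even size $a+b$. The empty position has size $0$ and is reachable, so it serves as the identity, and the reachable positions of even size form a submonoid of $(F_2)^n$. I do not expect any real obstacle. The only step needing care is the parity bookkeeping in the first check, which plays the same role as in the proof that $\lambda_n$ exists, where a suffix beginning with a red move was shown to be reachable.
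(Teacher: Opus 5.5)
Your proof is correct and follows the same route as the paper: you concatenate a realizing move sequence for $P$ with one for $Q$, and evenness of the first sequence guarantees the second still starts with a red move. You also spell out the parity bookkeeping and the inductive check that the concatenated sequence realizes $P+Q$, both of which the paper states in a single line.
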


\begin{proof}
Let $P_1$ and $P_2$ be reachable positions of even size. Since position size is additive, $P_1 + P_2$ is also of even size. If $S_1$ and $S_2$ are valid move sequences respectively realizing $P_1$ and $P_2$, then their concatenation $S_1S_2$ realizes $P_1 + P_2$. Since $S_1$ and $S_2$ are both color-alternating, begin with the placement of a red token, and end with the placement of a yellow token, $S_1S_2$ also has these properties and is a valid move sequence; therefore, $P_1 + P_2$ is reachable. 
\end{proof}

(Note that reachable positions in general fail to be closed under addition because, for instance, if both summands are of odd size, their sum will contain two more red tokens than yellow tokens and fail to be color-balanced.)

Growth rates of finitely generated monoids and semigroups, both exponential and subexponential, have been studied extensively (see \cite{Trofimov}, \cite{Burillo-Guba}, \cite{Nathanson}, \cite{Lavrik-Männlin}, \cite{Kambites-et-al}). The monoid of even-sized reachable positions on an $n$-column board is finitely generated by the $n^2$ elements corresponding to all possible red-yellow move pairs. Specifically, the generating set is the union of the orbits of $(R, Y, \varepsilon, \dots, \varepsilon)$ and $(RY, \varepsilon, \dots, \varepsilon)$ under the action of the symmetric group $S_n$. Since there are $F(n, 2m)$ length-$m$ products of these generators, $\lambda_n$ can be understood as the square root of the exponential growth rate of this monoid. 

\begin{theorem}
\label{thm:growth-rate} $\lim_{n \to \infty} \lambda_n = 2$.
\end{theorem}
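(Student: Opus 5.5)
Since $\lambda_n \le 2$ for every $n$ by the preceding theorem, it suffices to show $\liminf_{n\to\infty}\lambda_n \ge 2$. The plan is to find, inside the monoid of even-sized reachable positions (Proposition~\ref{prop:even}), a free submonoid whose number of generators of a given length grows fast enough. Because $F(n,k)$ is submultiplicative, $\lambda_n = \inf_k F(n,k)^{1/k}$. So a lower bound of the form $F(n,2m\ell) \ge c^{m}$ for all $m$ gives $\lambda_n \ge c^{1/(2\ell)}$. I will fix a block length $2\ell$ and use free products of blocks.

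The candidate free submonoid is as follows. Single out one column, say column $1$, as a marker column, and use the remaining $n-1$ columns as free columns. Consider the generators $g_j = (RY, \varepsilon, \dots, \varepsilon) + e_j$ for $j = 2,\dots,n$, where $e_j$ places $YR$ in column $j$. This is not quite reachable in order, so I will instead take $g_j$ to be the position given by the moves: red in column $1$, yellow in column $j$, red in column $j$, yellow in column $1$. That is, $g_j$ has $RY$ in column $1$ and $YR$ in column $j$, and it is reachable of size $4$. A product $g_{j_1} g_{j_2}\cdots g_{j_m}$ is reachable by Proposition~\ref{prop:even}.

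The key step is to show that distinct words give distinct positions, and this is where I expect the main obstacle to lie. Column $1$ just reads $(RY)^m$, so it carries no information about the order. Each free column $j$ reads $(YR)^{c_j}$, which records only the count $c_j$. So this naive choice is \emph{not} free; it only recovers a multiset, giving polynomially many positions. The fix is to make the colors in the free columns encode the sequence. I will use generators $h_{j,\sigma}$ indexed by a column $j$ and a color pattern $\sigma$, where the marker column $1$ receives a word whose colors are forced, and the information is carried as follows. For each $j$, let $h_{j}$ place $R$ in column $j$ and $Y$ in column $1$, and let $h'_{j}$ place $Y$ in column $j$ and $R$ in column $1$ (this is a legal move pair only in the order red-then-yellow, so $h'_j$ is realized as red in column $1$, yellow in column $j$). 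Then column $1$ records a binary word in $\{Y,R\}$ that is arbitrary, and each column $j$ records its own subsequence. A word $w$ of length $m$ over the $2(n-1)$ letters $h_j, h'_j$ determines the column-$1$ word $u \in \{R,Y\}^m$ (the complement colors) and the column-$j$ words. Conversely, from the position one recovers the interleaving only up to the column assignment of each letter. The free columns alone give a multiset partition, so recovering $w$ requires that each position's column assignment be determined. It is not, in general, but the count of positions is still at least the number of pairs (binary word $u$ of length $m$, assignment of the $m$ slots to columns) modulo the shuffle ambiguity.

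To handle this, I will instead lower-bound $F(n,2m)$ directly. The number of positions equals the number of ways to choose $u\in\{R,Y\}^m$ times the number of distinct tuples of column words. This is at least $2^m$ times the number of compositions $\binom{m+n-2}{n-2}$, which is only $2^m\cdot\mathrm{poly}$ and gives $\lambda_n \ge \sqrt2$. This is insufficient, so I will use blocks of length $\ell$ in which column $1$ is shared by many columns. Take generators built from $\ell$ move pairs, each pair hitting column $1$ and one column out of $n-1$, with the block arranged so that column $1$'s contribution is a fixed palindrome-free tag identifying the block boundary while the free columns receive essentially arbitrary color words. One then counts roughly $2^{2\ell - O(\log \ell)}\cdot$ (choices) generators, whose free product yields $\lambda_n \ge 2^{1 - O(1/\ell)}$ once $n$ is large enough to make the free columns disjoint per block (using $n-1 \ge \ell$). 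Letting $\ell\to\infty$ gives $\liminf \lambda_n \ge 2$. The hardest part is verifying unique decodability (freeness) of this block code; I would try to prove it by reading column $1$ from the bottom to parse block boundaries.
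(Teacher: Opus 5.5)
Your overall strategy, bounding $\liminf_n \lambda_n$ from below with a free submonoid of the even-size reachable monoid, is right. The gap is that the decisive step is never produced: you need an explicit family of reachable blocks of size $2\ell$, roughly $4^{\ell}$ in number, that generates a free monoid, and the block code you sketch cannot work as described.

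\textbf{Your blocks do not have free colors.} In your blocks every move pair puts one token in column $1$ and one in a free column. Since each pair is red-then-yellow, the two tokens have complementary colors. So if column $1$'s contribution is a fixed tag, every free-column color is forced. The free columns do not receive ``essentially arbitrary color words''; the only remaining freedom is the column assignment.

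\textbf{Freeness fails.} A fixed tag carries no information about how the free-column tokens are split among blocks. With $\ell=1$ and tag $R$, take block $A$ (red in column $1$, yellow in column $2$) and block $B$ (red in column $1$, yellow in column $3$). Then $A+B=B+A$. More generally, any two blocks with disjoint free-column supports commute. Reading column $1$ from the bottom cannot recover boundaries that column $1$ does not encode. So the count $2^{2\ell-O(\log\ell)}$ of freely generating blocks is unsupported, and no lower bound on $\lambda_n$ beyond your $\sqrt2$ is actually established.

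\textbf{The missing idea.} Let the \emph{shape} of each generator do the parsing. If every generator has a prescribed number of tokens in each column, any product decodes by reading each column in fixed-height chunks, so all information can be carried by colors.

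The paper does this on $2n$ columns. It takes the $\binom{2n}{n}$ color-balanced positions with exactly one token in every column. These are reachable and generate a free monoid, since the $i$th row of a product is the $i$th summand. By Proposition~\ref{prop:even} every product is reachable. This gives $F(2n,2nm)\ge\binom{2n}{n}^m$, hence $\lambda_{2n}\ge\binom{2n}{n}^{1/(2n)}\to 2$. The monotonicity $\lambda_{n+1}\ge\lambda_n$ then handles odd $n$.

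Your marker-column scheme can be repaired the same way. Take $\ell=n-1$ pairs that use each free column exactly once, and let column $1$'s word range over all of $\{R,Y\}^{\ell}$ rather than being a fixed tag. Block heights are then fixed, freeness is immediate, and there are $\sum_a\binom{\ell}{a}^2=\binom{2\ell}{\ell}$ distinct blocks. At that point, though, the marker column is doing nothing beyond the paper's fixed-height trick.
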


\begin{proof}
On a $2n$-column board, consider the set $M$ of size-$2n$ color-balanced positions with all tokens on the bottom row. There are $\binom{2n}{n}$ such positions, all of which are reachable. In any sum of these positions, the $i$th row from the bottom is identical to the bottom row of the $i$th summand, and hence the sequence of summands is uniquely reconstructible; as such, the monoid generated by $M$ is free on $M$.

By Proposition~\ref{prop:even}, all positions in this monoid are reachable. This gives $F(2n, 2nm) \geq \binom{2n}{n}^{m}$, which implies that $F(2n, k)^{1/k} \geq \binom{2n}{n}^{1/2n}$ infinitely often, and hence $\lambda_{2n} \geq \binom{2n}{n}^{1/2n}$. Since $\lim_{n \to \infty} \binom{2n}{n}^{1/2n} = 2$ and $\lambda_{2n} \leq 2$, it follows that $\lim_{n \to 
\infty} \lambda_{2n} = 2$.

Any reachable position on an $n$-column board is embeddable in an $(n+1)$-column board, so $F(n + 1, k) \geq F(n, k)$. This means that $\lambda_{n + 1} \geq \lambda_n$. Because $\lambda_n$ is bounded above and monotonically increasing in $n$, we conclude that the limit of $\lambda_n$ as $n \to \infty$ exists; because the even subsequence converges to $2$, we conclude that the limit is $2$.

\end{proof}

\begin{remark}
Theorem~\ref{thm:growth-rate} implies that, as $n \to \infty$, the growth rate of the monoid generated by the $S_n$-orbits of $(R, Y, \varepsilon, \dots, \varepsilon)$ and $(RY, \varepsilon, \dots, \varepsilon)$ approaches $4$. Notably, because the proof uses a free monoid generated by a set of positions in which every coordinate is of length $1$, this result also holds for the smaller monoid generated by the $S_n$-orbit of $(R, Y, \varepsilon, \dots, \varepsilon)$ alone. The technique is similar to methods used in \cite{Burillo-Guba} and \cite{Hamann} to lower-bound monoid growth rates.
\end{remark}

\subsection{Numerical Bounds on \texorpdfstring{$\lambda_n$}{lambda n}}

From OEIS entry A320452 \cite{OEIS}, $F(2, 35) = 2,372,700,708$, so $\lambda_2 = \inf_{k \in \mathbb{N}} F(2, k)^{1/k} \leq 2,372,700,708^{1/35} = 1.85295 \dots$ 

Using a variation on the free monoid embedding argument above, we can bound the growth rate more tightly from below. Any set of positions $M$ with exactly $m$ tokens in each column generates a monoid that is free on $M$. (From a sum of such positions, it is always possible to ``read off'' the summands from the bottom up in blocks of height $m$.) For an even number of columns, all positions in this free monoid are reachable.

It has been computationally verified using BFS enumeration that, on a $2$-column board, the number of reachable positions with exactly $16$ tokens in each column is 43,986,734.\footnote{This can also be understood as the number of reachable \emph{final} positions on a $16 \times 2$ board, a distinct combinatorial problem in its own right for boards of arbitrary finite dimensions.}
Therefore, $F(2, 32k) \geq 43,986,734^k$, which implies $\lambda_2 \geq 43,986,734^{1/32} = 1.7332 \dots$ 

Similarly, it has been verified that the number of reachable positions on a $4$-column board with $6$ tokens in each column is $2,421,904$. This bounds $\lambda_4$ from below by $2,421,904^{1/24} = 1.84504 \dots$ 

As $\lambda_n$ grows monotonically, we have the following bounds.

\[1.73322 \dots \leq \lambda_2 \leq 1.85295 \dots\]

\[1.73322 \dots \leq \lambda_3 \leq 2\]

\[1.84504 \dots \leq \lambda_n \leq 2 \qquad (n \geq 4)\]

\begin{remark}
It is unclear whether $\lambda_n$ actually equals $2$ for any particular $n$. The proven facts are consistent with multiple possibilities, of which two seem most likely: either $n = 2$ is a special case and the growth rate is exactly $2$ for $n \geq 3$, or the growth rates merely approach $2$ from below and never reach it (similarly to the $n$-bonacci numbers). For reference, if the ratios of successive terms of $F(3, k)$ are computed from OEIS entry A320731 \cite{OEIS}, the last three are $2.0757883, 2.0713972,$ and $2.0677381$.
\end{remark}

\subsection{Asymptotic Relation Between \texorpdfstring{$F(n,k)$ and $T_{k,n}$}{F(n,k) and T(k,n)}}

\begin{conjecture*}
For fixed $n$, $T_{k, n} \sim F(n, k)$.
\end{conjecture*}

This is equivalent to the following:

\begin{itemize}
 \item The term corresponding to $i = n$ in Equation~\eqref{eq:F-expansion} dominates.

\item $F(n - 1, k) = o(F(n, k))$.

 \item As the number of moves tends to infinity, the proportion of reachable positions with tokens in every column approaches $1$. 

\end{itemize}

In fact, we expect more strongly that for any fixed $m$, as the number of moves tends to infinity, the proportion of reachable positions in which every column contains at least $m$ tokens approaches $1$. If a reachable position is constructed by choosing \emph{moves} uniformly at random, the resulting distribution trivially has this property. While choosing \emph{reachable positions} uniformly at random yields a different distribution, there is no obvious reason to expect that this distribution differs so radically as to falsify the strengthened conjecture; this would require a surprising bias towards positions with a strong imbalance between columns. 

The equivalent form $F(n - 1, k) = o(F(n, k))$ is certainly true if $\lambda_n$ is strictly increasing in $n$, but it is also compatible with $\lambda_n$ being eventually constant, as subexponential factors could still differentiate asymptotic behavior for different values of $n$.

\section{Some Directions for Further Exploration}

\begin{itemize}

\item Some of our results generalize to an arbitrary number of alternating colors; in particular, the proof of Theorem~\ref{thm:polynomiality} applies directly to the general case, and an analogous form of Theorem~\ref{thm:growth-rate} likely holds for more than $2$ colors. One thing that may significantly change is the amount of ``tolerable stacking'' quantified in Theorem~\ref{thm:reachability}.

\item 
We have noted that $F(n, 2m)$ counts length-$m$ elements of the submonoid of $(F_2)^n$ generated by the $S_n$-orbits of $(R, Y, \varepsilon, \dots, \varepsilon)$ and $(RY, \varepsilon, \dots, \varepsilon)$. A natural extension of the generating set is obtained by including the $S_n$-orbit of $(YR, \varepsilon, \dots, \varepsilon)$, making the monoid symmetric with respect to the generators of the underlying free monoid $F_2$. The resulting structure would be a superset of even-sized reachable positions and a subset of even-sized color-balanced positions. This modification has no natural interpretation in the Connect Four setting, but may be of independent algebraic interest.

\item Counting reachable positions up to column permutation is a distinct problem from the one treated here. An interesting property of this problem is that we can allow the number of columns to be infinite, which makes the number of reachable positions a function only of the number of moves. The values of this function on the first eight positive integers, computed by exhaustive enumeration, are $1$, $2$, $5$, $13$, $32$, $81$, $203$, and $518$.

\end{itemize}

\section*{Acknowledgments}
I would like to thank Dr. Gary Gutman for his support and encouragement.

\end{document}